\newtheorem{thm}{Theorem}
\newtheorem{prop}[thm]{Proposition}
\newtheorem{lem}[thm]{Lemma}
\newtheorem{cor}[thm]{Corollary}
\theoremstyle{remark}
\newcommand{\FF}{\mathbb{F}}
\newcommand{\ZZ}{\mathbb{Z}}
\newcommand{\RR}{\mathbb{R}}
\newcommand{\1}{\mathbf{1}}
\newcommand{\cF}{\mathcal{F}}
\DeclareMathOperator{\Aut}{Aut}
\DeclareMathOperator{\kernel}{Ker}
\begin{document}
\title{A Complete Classification of Ternary Self-Dual Codes of Length 24}

\author{
Masaaki Harada\thanks{
Department of Mathematical Sciences,
Yamagata University,
Yamagata 990--8560, Japan}
and 
Akihiro Munemasa\thanks{
Graduate School of Information Sciences,
Tohoku University,
Sendai 980--8579, Japan}
}

\maketitle

\begin{abstract}
Ternary self-dual codes have been classified for lengths
up to $20$.
At length $24$, a classification of only extremal self-dual codes 
is known.
In this paper, we give a complete classification
of ternary self-dual codes of length $24$
using the classification of $24$-dimensional odd unimodular lattices.
\end{abstract}

%%%%%%%%%%%%%%%  Section 1  %%%%%%%%%%%%%%%%%%
\section{Introduction}
\label{Sec:1}
As described in~\cite{RS-Handbook},
self-dual codes are an important class of linear codes for both
theoretical and practical reasons.
It is a fundamental problem to classify self-dual codes
of modest length 
and determine the largest minimum weight among self-dual codes
of that length.
By the Gleason--Pierce theorem, there are
nontrivial divisible self-dual
codes over $\FF_q$ for $q=2,3$ and $4$ only, where $\FF_q$  
denotes the
finite field of order $q$, and this is one of the reasons why
much work has been done concerning self-dual codes over 
these fields.

A code $C$ over $\FF_3$ is called ternary.
All codes in this paper are ternary.
A code $C$ of length $n$ is said to be {\em self-dual} if
$C=C^\perp$, where
the dual code $C^\perp$ of $C$ is defined as 
$C^\perp=\{x \in \FF_3^{n} |\ x \cdot y=0 \text{ for all } y\in C\}$
under the standard inner product $x \cdot y$. 
A self-dual code of length $n$ exists if and only if 
$n \equiv 0 \pmod 4$.
It was shown in~\cite{Mallows-Sloane} that the minimum weight 
$d$ of a self-dual code of length $n$ is bounded by $d\leq 3[n/12]+3$.
If $d=3[n/12]+3$, then the code is called {\em extremal}.  
Two codes $C$ and $C'$ are {\em equivalent} 
if there exists a monomial matrix $P$ with 
$C' = C \cdot P = \{ x P\:|\: x \in C\}$.  
The {\em automorphism group} $\Aut(C)$ of $C$ is the group of all
monomial matrices $P$ with
$C = C \cdot P$.

All self-dual codes of length $\leq 20$ have been 
classified~\cite{CPS,MPS,PSW}.
At length 24, the complete classification has not been done yet, 
although it was shown by Leon, Pless and Sloane \cite{LPS} 
that there are exactly two inequivalent extremal
self-dual codes, namely, the extended quadratic residue code $QR_{24}$
and the Pless symmetry code $P_{24}$.
Moreover, it was shown in \cite{LPS} that
there are at least $13$ and $96$ inequivalent self-dual codes
with minimum weights $3$ and $6$, respectively.

Applying the classification method in~\cite{HMV} to length $24$,
we give a classification of self-dual codes of length $24$
with minimum weights $3$ and $6$,
which completes the classification of self-dual
codes of length $24$.

\begin{thm}\label{thm}
There are exactly $166$ inequivalent ternary self-dual 
$[24,12,6]$ codes.
There are exactly $170$ inequivalent ternary self-dual 
$[24,12,3]$ codes.
\end{thm}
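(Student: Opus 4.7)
The plan is to apply the lattice-theoretic method of~\cite{HMV}: via Construction~A, a ternary self-dual code $C$ of length $24$ corresponds to the rank-$24$ lattice $L(C)=\frac{1}{\sqrt{3}}\{x\in\ZZ^{24}\mid (x\bmod 3)\in C\}\subset\RR^{24}$, which is unimodular because $|C|=3^{12}$. Inside $L(C)$ the $24$ pairs $\pm\sqrt{3}\,e_i$ form what we call a \emph{$3$-frame}: an orthogonal system of $24$ pairs of vectors of squared norm $3$ generating a full-rank sublattice. Because the frame vectors have odd squared norm, $L(C)$ is always odd. Conversely, any $24$-dimensional odd unimodular lattice $L$ together with a $3$-frame $F$ determines a ternary self-dual code $C(L,F)$, and one verifies that two codes are monomially equivalent if and only if the corresponding pairs $(L,F)$ are related by a lattice isometry. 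Hence the classification reduces to the enumeration of $\Aut(L)$-orbits of $3$-frames, as $L$ ranges over the isomorphism classes of $24$-dimensional odd unimodular lattices. Moreover, a Hamming weight-$w$ codeword in $C$ lifts to a vector of squared norm $w/3$ in $L(C)$ and vice versa, so the minimum weight of $C$ equals $3$ times the minimum squared norm of $L(C)$: codes of minimum weight $3$ arise from lattices containing a norm-$1$ vector, those of minimum weight $6$ from lattices of minimum norm $2$, and the extremal codes of minimum weight $9$ from the unique lattice of minimum norm $3$, namely the odd Leech lattice $O_{24}$.

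The next step is to iterate over the (already classified) list of $24$-dimensional odd unimodular lattices, for which explicit Gram matrices, short-vector sets, and orders of $\Aut(L)$ are all available. For each $L$ with minimum norm $\leq 2$, the plan is to enumerate all orthogonal $24$-tuples of norm-$3$ vectors of $L$ and then group them into $\Aut(L)$-orbits by a standard orbit--stabilizer computation; each orbit yields exactly one equivalence class of codes, whose minimum weight is determined by the minimum norm of $L$. Summing the orbit counts across lattices of minimum norm $1$ should produce $170$ codes with $d=3$, and across lattices of minimum norm $2$ should produce $166$ codes with $d=6$; the odd Leech lattice $O_{24}$ is already handled in~\cite{LPS} and accounts for the two extremal codes $QR_{24}$ and $P_{24}$.

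The main obstacle is computational: lattices with many short vectors, such as $\ZZ^{24}$ or direct-sum lattices like $E_8\oplus\ZZ^{16}$, carry very large numbers of norm-$3$ vectors and correspondingly large automorphism groups, so a naive generation of all $3$-frames followed by an orbit reduction would be infeasible. The key idea in~\cite{HMV} is to exploit the decomposition of each lattice into irreducible orthogonal summands and the induced block structure on $\Aut(L)$, so that $3$-frames can be assembled component by component while orbits are tracked inductively. A final consistency check would compare the totals $170$ and $166$ with the lower bounds $13$ and $96$ established in~\cite{LPS} and verify that the two extremal codes are indeed recovered from $O_{24}$.
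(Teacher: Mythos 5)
Your reduction is the paper's own framework: via Construction~A a self-dual code $C$ gives an odd unimodular lattice $A_3(C)$ with a $3$-frame, every code with $A_3(C)\cong L$ arises from a $3$-frame of $L$, and two frames give equivalent codes if and only if they are related by an element of $\Aut(L)$ (Lemma~\ref{Lem:Key}, quoted from \cite{HMV}). The genuine gap is in the step you yourself identify as the crux, namely how the orbit enumeration of $3$-frames is actually carried out. The ``component-by-component'' assembly of frames from irreducible orthogonal summands does not work in general: for the minimum-norm-$1$ lattices $M_{i,j}\oplus\ZZ^{24-i}$ a norm-$3$ vector typically straddles the two summands (for instance a norm-$2$ vector of $M_{i,j}$ plus a unit vector of $\ZZ^{24-i}$), so a $3$-frame is \emph{not} a union of frames of the summands; the paper's Lemma~\ref{Lem:Dec}, which does split frames along a direct-sum decomposition, explicitly requires minimum norm $\ge 2$ for exactly this reason. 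And among the $155$ minimum-norm-$2$ lattices all but two are indecomposable, so the decomposition idea buys nothing there either. What the paper actually does is encode the problem as a $24$-clique search in a graph $\Gamma$ whose vertices are the pairs $\{v,-v\}$ of norm-$3$ vectors, with adjacency given by orthogonality, after first deleting those vertices which a shadow-lattice criterion (their Lemma~3) shows cannot belong to any $3$-frame; the $\Aut(L)$-orbits of $24$-cliques are then computed in {\sc Magma}. Some concrete device of this kind is needed before the enumeration you describe becomes feasible, and the shadow criterion is also what lets one certify that $19$ of the lattices admit no $3$-frame at all.

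A second, lesser point: your final consistency check against the lower bounds $13$ and $96$ of \cite{LPS} cannot certify that the enumeration is complete. The paper instead recomputes the theoretically determined mass-type invariants $T_i$ of \cite[Table~I]{LPS} for each weight-$3$ count and verifies the full mass formula $\sum_{D}2^{24}\cdot 24!/\#\Aut(D)=N(24)$; these checks do confirm that no equivalence class has been missed, independently of the completeness of the frame search in each individual lattice.
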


Generator matrices of all self-dual codes of length $24$
can be obtained electronically from~\cite{Data}.
All computer calculations in this paper
were done by {\sc Magma}~\cite{Magma}.

%%%%%%%%%%%%%%%%%%%%%%%%%%%%%%%%
\section{Preliminaries}
\label{Sec:Pre}

An $n$-dimensional (Euclidean) lattice $L$ is {\em integral} if
$L \subseteq L^{*}$, where
the dual lattice $L^{*}$ is defined as 
$L^{*} = \{ x \in {\RR}^n | (x,y) \in \ZZ \text{ for all }
y \in L\}$ under the standard inner product $(x,y)$.
A lattice $L$ with $L=L^{*}$
is called {\em unimodular}.
The {\em norm} of a vector $x$ is $(x, x)$.
The {\em minimum norm} of $L$ is the smallest 
norm among all nonzero vectors of $L$.
A unimodular lattice $L$ is {\em even} if all vectors of $L$ have even norms,
and {\em odd} if some vector has an odd norm.
The {\em kissing number} of $L$ is the number of vectors of $L$ with
minimum norm.
Two lattices $L$ and $L'$ are {\em isomorphic}, denoted $L \cong L'$,
if there exists an orthogonal matrix $A$ with
$L' = L \cdot A$.
The {\em automorphism group} $\Aut(L)$ of $L$ is the group of all
orthogonal matrices $A$ with $L = L \cdot A$.

If $C$ is a self-dual code of length $n$ and minimum weight $d$,
then 
\[
A_{3}(C) = \frac{1}{\sqrt{3}} 
\{(x_1,\ldots,x_n) \in \ZZ^n \:|\: 
(x_1 \bmod 3,\ldots,x_n \bmod 3)\in C\}
\]
is a unimodular lattice with minimum norm $\min\{3,d/3\}$.
This construction of lattices from codes
is called Construction A\@. 

\begin{lem}\label{Lem:A3A6}
Let $C$ be a ternary self-dual code of length $n$.
Let $\alpha_i$ (resp.\ $\beta_{3i}$) be the number of vectors of
norm $i$ in $A_3(C)$ (resp.\ codewords of weight $3i$ in $C$)
($i=1,2$).
Then
\[
\alpha_2= \beta_6 + 3 \beta_3 \text{ and } \alpha_1= \beta_3.
\]
\end{lem}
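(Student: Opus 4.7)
The plan is to analyze, for each target norm, which integer vectors $(x_1, \ldots, x_n) \in \ZZ^n$ give rise to a vector $\tfrac{1}{\sqrt 3}(x_1,\ldots,x_n)$ of that norm in $A_3(C)$, and then match these with codewords of $C$ via reduction mod $3$.

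First I would handle $\alpha_1$. A vector $\tfrac{1}{\sqrt 3}(x_1,\ldots,x_n) \in A_3(C)$ has norm $1$ iff $\sum x_i^2 = 3$, which forces exactly three coordinates to be $\pm 1$ and the rest to be $0$. The reduction modulo $3$ of such a vector is a weight-$3$ codeword $c$ of $C$. Conversely, given a weight-$3$ codeword $c$, the residue condition $x_i \equiv c_i \pmod 3$ combined with $|x_i| \leq 1$ forces $x_i = 1$ when $c_i = 1$, $x_i = -1$ when $c_i = 2$, and $x_i = 0$ when $c_i = 0$; this is a unique lift. Thus $\alpha_1 = \beta_3$.

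For $\alpha_2$, I would enumerate integer solutions to $\sum x_i^2 = 6$: up to sign, either six coordinates equal $\pm 1$ (call this type (A)) or one coordinate equals $\pm 2$ and two coordinates equal $\pm 1$ (type (B)), with all other coordinates zero. Vectors of type (A) reduce to weight-$6$ codewords of $C$, and as in the norm-$1$ case, each weight-$6$ codeword has a unique lift with all nonzero entries equal to $\pm 1$; this contributes $\beta_6$. Vectors of type (B) reduce to weight-$3$ codewords of $C$, because $\pm 2 \equiv \mp 1 \pmod 3$ is nonzero. For each weight-$3$ codeword $c$, I must count the lifts: at each support position of $c$, the residue and the constraint $|x_i| \in \{1,2\}$ allow exactly the two choices $\{1,-2\}$ (if $c_i = 1$) or $\{-1,2\}$ (if $c_i = 2$). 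The norm constraint requires exactly one of the three support positions to carry $\pm 2$; there are $3$ ways to choose which one, and then the signs are fully determined by the residue. This gives $3\beta_3$ lifts, so $\alpha_2 = \beta_6 + 3\beta_3$.

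There is no real obstacle; the argument is just a bookkeeping case analysis on integer representations of $3$ and $6$ as sums of squares, together with the observation that the $\pmod 3$ residue determines the signs once the absolute values are fixed. The only step worth double-checking is the counting in type (B), namely that each weight-$3$ codeword really gives rise to $3$, not $6$, norm-$2$ lattice vectors — this is because once the position of the $\pm 2$ is selected, both its sign and the signs of the two $\pm 1$ entries are forced by the residues.
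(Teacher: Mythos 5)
Your proof is correct, and the paper itself omits the argument as ``straightforward,'' so your case analysis on the integer representations $3=1+1+1$ and $6=1+\cdots+1=4+1+1$, with the mod-$3$ residues forcing the signs of the lifts, is precisely the intended argument. In particular you correctly identify and resolve the one subtle point, namely that each weight-$3$ codeword yields exactly $3$ (not $6$) norm-$2$ lattice vectors of type (B), since the sign of the $\pm2$ entry is determined by the residue.
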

\begin{proof}
The straightforward proof is omitted.
\end{proof}

The weight distribution of a self-dual code $C$
of length $24$ is determined by the numbers $\beta_3,\beta_6$
(see \cite[Table III]{LPS}).
Hence the weight distribution of $C$ 
can be determined by the numbers $\alpha_1,\alpha_2$ in $A_3(C)$.

There are $155$ non-isomorphic $24$-dimensional odd
unimodular lattices with minimum norm $2$, and
there are $117$ non-isomorphic $24$-dimensional odd
unimodular lattices with minimum norm $1$
\cite{Bor} (see also \cite[Table 2.2]{SPLAG}).
We denote the $i$-th $24$-dimensional odd
unimodular lattice with minimum norm $\ge2$
in~\cite[Table 17.1]{SPLAG} by $L_{24,i}$
($i=1,2,\ldots,156$).
The lattices $L_{24,i}$
($i=2,3,\ldots,156$) are the $155$
non-isomorphic $24$-dimensional odd
unimodular lattices with minimum norm $2$.
A $24$-dimensional unimodular lattice with minimum
norm $1$ except $\ZZ^{24}$ can be constructed as
$M_i \oplus \ZZ^{24-i}$ where $M_i$ is an
$i$-dimensional unimodular lattice with minimum norm $\ge 2$.
Here we denote a $24$-dimensional unimodular lattice 
$M_{i,j}\oplus \ZZ^{24-i}$ with minimum norm $1$ 
by $L_{i,j}$ where $M_{i,j}$ is the $j$-th 
$i$-dimensional unimodular lattice with minimum norm $\ge 2$ 
in~\cite[Table 16.7]{SPLAG}.
All non-isomorphic unimodular lattices with minimum norm $\ge 2$ 
can be constructed as neighbors of the standard lattices
for dimensions up to $24$
(see \cite[Tables I, II and III]{Ba97}).

A set $\{f_1, \ldots, f_{n}\}$ of $n$ vectors $f_1, \ldots, f_{n}$ in an
$n$-dimensional lattice $L$ with
$(f_i, f_j) = 3 \delta_{ij}$
is called a {\em $3$-frame} of $L$,
where $\delta_{ij}$ is the Kronecker delta.
Clearly, $A_3(C)$ has a $3$-frame.
Conversely, every self-dual code
can be obtained from a $3$-frame of some unimodular lattice.
Let $\cF=\{f_1,\dots,f_n\}$ be a $3$-frame of $L$. Consider the
mapping
\begin{align*}
&\pi_{\cF}:\frac13\oplus_{i=1}^n\ZZ f_i\to\FF_3^n\\
&\pi_{\cF}(x)=((x,f_i) \bmod 3)_{1\leq i\leq n}.
\end{align*}
Then $\kernel\pi_{\cF}=\oplus_{i=1}^n\ZZ f_i\subset L$, so
the code $C=\pi_{\cF}(L)$ satisfies $\pi_{\cF}^{-1}(C)=L$.
This implies $A_3(C)\cong L$ and every code $C$ with
$A_3(C)\cong L$ is obtained as $\pi_{\cF}(L)$ for some
$3$-frame $\cF$ of $L$.

\begin{lem}[\cite{HMV}]
\label{Lem:Key}
Let $L$ be an $n$-dimensional integral lattice, and let
$\cF=\{f_1,\dots,f_n\}$, $\cF'=\{f'_1,\dots,f'_n\}$ be
$3$-frames of $L$. Then the codes $\pi_{\cF}(L)$ and
$\pi_{\cF'}(L)$ are equivalent if and only if there exists an
orthogonal matrix $P\in\Aut(L)$ such that
$\{\pm f_1,\dots,\pm f_n\}\cdot P=
\{\pm f'_1,\dots,\pm f'_n\}$.
\end{lem}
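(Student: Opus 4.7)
My approach is to view the $3$-frame $\cF=\{f_1,\ldots,f_n\}$ as an orthogonal basis of $\RR^n$ whose vectors all have norm $3$. An orthogonal matrix $P$ with $\{\pm f_1,\ldots,\pm f_n\}\cdot P=\{\pm f'_1,\ldots,\pm f'_n\}$ is then the same data as a permutation $\sigma\in S_n$ together with signs $\epsilon_i\in\{\pm1\}$ satisfying $f_i\cdot P=\epsilon_i f'_{\sigma(i)}$. A monomial matrix $M$ acting on $\FF_3^n$ is likewise parameterized by such a pair $(\sigma,\{\epsilon_i\})$, via the entry $\epsilon_i$ in position $(i,\sigma(i))$. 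The lemma asserts that these two actions are compatible through $\pi_\cF$ and $\pi_{\cF'}$.

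The single calculation that drives the proof is the identity
\[
\pi_{\cF'}(x\cdot P)_j\equiv\epsilon_{\sigma^{-1}(j)}\,\pi_{\cF}(x)_{\sigma^{-1}(j)}\pmod{3},
\]
valid for any $x$ in the domain of $\pi_\cF$ such that $x\cdot P$ lies in the domain of $\pi_{\cF'}$. It follows from $(x\cdot P,f'_j)=(x,f'_j\cdot P^{-1})$ (using $P^{-1}=P^{T}$) and from inverting $f_i\cdot P=\epsilon_i f'_{\sigma(i)}$ to obtain $f'_j\cdot P^{-1}=\epsilon_{\sigma^{-1}(j)}f_{\sigma^{-1}(j)}$. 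The right-hand side of the displayed formula is precisely the $j$-th coordinate of $\pi_\cF(x)\cdot M$. From this, the ``if'' direction is immediate: if $P\in\Aut(L)$ realizes the signed permutation of frames, then $\pi_{\cF'}(L)=\pi_{\cF'}(L\cdot P)=\pi_\cF(L)\cdot M$, so the codes are equivalent.

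For the ``only if'' direction, given a monomial $M$ with $\pi_\cF(L)\cdot M=\pi_{\cF'}(L)$, I would define $P$ on $\RR^n$ by $f_i\cdot P=\epsilon_i f'_{\sigma(i)}$; this is orthogonal because it sends one orthogonal basis of norm-$3$ vectors to another. The remaining task is to show $L\cdot P=L$. For any $x\in L$, integrality of $L$ together with $f_i,f'_i\in L$ gives $(x,f_i)\in\ZZ$ and $(x\cdot P,f'_j)=\epsilon_{\sigma^{-1}(j)}(x,f_{\sigma^{-1}(j)})\in\ZZ$, so both $x$ and $x\cdot P$ lie in the domains of $\pi_\cF$ and $\pi_{\cF'}$ respectively. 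The identity then yields $\pi_{\cF'}(x\cdot P)=\pi_\cF(x)\cdot M\in\pi_{\cF'}(L)$, and the relation $\pi_{\cF'}^{-1}(\pi_{\cF'}(L))=L$ recorded just before the lemma forces $x\cdot P\in L$; together with $|\det P|=1$ this upgrades $L\cdot P\subseteq L$ to $L\cdot P=L$. The main obstacle is really the bookkeeping around the distinction between $L$ and the $\tfrac13$-lattice on which $\pi_\cF$ is actually defined: this is precisely where the integrality hypothesis on $L$ enters, ensuring $L$ sits inside this domain so that the key identity can be applied to lattice vectors.
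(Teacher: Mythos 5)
The paper itself offers no proof of this lemma---it is quoted from \cite{HMV}---so there is nothing internal to compare against; judged on its own terms, your argument is correct and complete. The key identity $\pi_{\cF'}(x\cdot P)=\pi_{\cF}(x)\cdot M$ is computed correctly, the integrality of $L$ is invoked exactly where it is needed (to place $L$, and hence $x$ and $x\cdot P$, inside the domains of $\pi_{\cF}$ and $\pi_{\cF'}$), the orthogonality of the constructed $P$ follows as you say from its carrying one orthogonal norm-$3$ basis to another, and the upgrade from $L\cdot P\subseteq L$ to $L\cdot P=L$ via $\det P=\pm1$ together with the relation $\pi_{\cF'}^{-1}(\pi_{\cF'}(L))=L$ closes the ``only if'' direction.
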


In order to establish 
the nonexistence of a $3$-frame for some lattices, 
the shadows of lattices are considered.
Let $L=L_0\cup L_2$ be an odd unimodular lattice with 
even sublattice $L_0$. 
Then $L_0^*$ can be written as a union of cosets of $L_0$:
$L_0^*=L_0\cup L_2 \cup L_1 \cup L_3$.
The shadow $S$ of $L$ is defined to be $S=L_1\cup L_3$~\cite{CS90}.

\begin{lem}
Let $L=L_0\cup L_2$ be a $24$-dimensional odd unimodular lattice
with shadow $S=L_1\cup L_3$.
Let $v$ be a vector of $L_2$ with $(v,v)=3$.
If there exist vectors $a\in L_1$ and $b\in L_3$ such that
$(a,a)=(b,b)=2$, $(a,b)=1/2$ and $v=a-b$, then 
$v$ does not belong to any $3$-frame of $L$.
\end{lem}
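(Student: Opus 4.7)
The plan is to argue by contradiction. Suppose $v$ does belong to a $3$-frame $\cF=\{f_1,\dots,f_{24}\}$ of $L$, with $v=f_1$. The strategy is to expand $a$ with respect to the orthogonal basis $\{v\}\cup\{f_j\}_{j\ge2}$ of $\RR^{24}$, and then compute the squared norm of its projection onto $v^\perp$ in two different ways to produce a contradiction.

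First I would compute $(a,v)=(a,a)-(a,b)=3/2$, and use this to obtain the squared length of the projection $a_\perp$ of $a$ onto $v^\perp$, namely $(a_\perp,a_\perp)=(a,a)-(a,v)^2/(v,v)=5/4$. Since $\{f_j\}_{j\ge2}$ is an orthogonal basis of $v^\perp$ with $(f_j,f_j)=3$, the standard projection formula then gives the identity $\sum_{j=2}^{24}(a,f_j)^2=15/4$.

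Next I would exploit the shadow hypothesis $a\in L_1\subset L_0^*$ to pin down the possible values of $(a,f_j)$. Since $(v,v)=(f_j,f_j)=3$ are odd while $(v,f_j)=0$, the vector $v+f_j$ has even norm $6$ and hence lies in the even sublattice $L_0$. Pairing with $a$ then forces $(a,v)+(a,f_j)\in\ZZ$, giving $(a,f_j)\in\tfrac12+\ZZ$. Writing $(a,f_j)=e_j/2$ with $e_j$ an odd integer, the previous identity becomes $\sum_{j=2}^{24}e_j^2=15$.

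The final step, which is also the crux, is a counting contradiction: each of the $23$ integers $e_j$ is odd, so $e_j^2\ge1$ and the sum must be at least $23>15$. The only real subtlety is recognizing that the shadow condition on $a$ pushes each $(a,f_j)$ off the integers by exactly $\tfrac12$; once this parity constraint is in place, the dimension count $23>15$ closes the argument, and note that the vector $b$ enters only through its role in defining $v$ and fixing $(a,v)=3/2$.
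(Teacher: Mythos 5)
Your proof is correct, and it follows the same broad strategy as the paper's (assume $v=f_1$ in a frame, expand $a$ against the orthogonal frame basis, and derive a contradiction from a parity-plus-counting argument), but it differs in one genuine and worthwhile respect: where the paper invokes Lemma~2 of the reference [HKO] to assert that any $a\in L_1$ has the form $\frac16\sum a_if_i$ with all $a_i$ odd, you derive the needed half-integrality $(a,f_j)\in\tfrac12+\ZZ$ from scratch, by observing that $v+f_j$ has even norm and hence lies in $L_0$, so that pairing with $a\in L_1\subset L_0^*$ forces $(a,v)+(a,f_j)\in\ZZ$. This makes your argument self-contained modulo only the definition of the shadow. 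The final arithmetic is the same in both proofs, just arranged differently: the paper uses $(a,a)=2$ to force all $a_i=\pm1$ and then contradicts $(a,v)=3/2$, whereas you impose $(a,v)=3/2$ first and contradict the resulting bound $\sum_{j\ge2}e_j^2=15<23$. One small point worth making explicit in a final write-up is that $v+f_j$ does lie in $L$ (both are frame vectors of $L$), which is what licenses the claim $v+f_j\in L_0$; with that said, the argument is complete.
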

\begin{proof}
Suppose that $\{f_1,\dots,f_{24}\}$ is a
$3$-frame of $L$ and $v=f_1$.
By Lemma~2 in~\cite{HKO}, a vector $a \in L_1$ can be written as
\[
a=\frac16\sum_{i=1}^{24} a_if_i,\quad
(a_i \in 1+2\ZZ).
\]
Since
\[
2=(a,a)=\frac{1}{12}\sum_{i=1}^{24}a_i^2
\ge \frac{1}{12}\sum_{i=1}^{24}1 =2,
\]
we have $a_i=\pm1$ for all $1\leq i\leq24$.
Then
\[
\frac32 
=(a,a)-(a,b)
=(a,v)
=\frac{a_1}{2}
=\pm\frac{1}{2}.
\]
This is a contradiction.
\end{proof}

Let $L$ be a $24$-dimensional unimodular lattice and
let $V$ be the set of pairs
$\{v,-v\}$ with $(v,v)=3$, $v \in L$ satisfying the
condition that
there do not exist $a\in L_1$ and $b\in L_3$ such that
$(a,a)=(b,b)=2$, $(a,b)=1/2$ and $v=a-b$.
We define the simple undirected graph $\Gamma$, whose set of  
vertices
is the set $V$ and
two vertices $\{v,-v\},\{w,-w\}\in V$  
are adjacent if $(v,w)=0$.
It follows that the $3$-frames are precisely the $24$-cliques
in the graph $\Gamma$.
It is clear that $\Aut(L)$ acts on the graph $\Gamma$
as automorphisms, and
Lemma~\ref{Lem:Key} implies that the $\Aut(L)$-orbits 
on the set of $24$-cliques of $\Gamma$ are in one-to-one
correspondence with the equivalence classes of codes
$C$ satisfying $A_3(C)\cong L$.
Therefore, the classification of such codes reduces to
finding a set of representatives of
$24$-cliques of $\Gamma$ up to the action of $\Aut(L)$. 
This computation was performed by {\sc Magma}~\cite{Magma}, 
the results were then converted to $3$-frames, and then
to self-dual codes of length $24$.
In this way, by considering $3$-frames of all $24$-dimensional 
odd unimodular lattices with minimum norms $2$ and $1$,
we have all inequivalent self-dual codes of length $24$
with minimum weights $6$ and $3$, respectively.

Note that the graph $\Gamma$ is an empty graph
for the lattice $L_{i,j}$, where
\begin{align*}
(i,j)=
&
(20,  1),(22,  1),(23,  2),(23,  3),(24, 94),(24,125),(24,126),
\\ &
(24,135),(24,136),(24,137),(24,143),(24,147),(24,148),
\\ &
(24,149),(24,151),(24,152),(24,153),(24,155),(24,156).
\end{align*}
In particular,
none of these lattices have a $3$-frame.

%%%%%%%%%%%%%%
\section{Decomposable self-dual codes of length 24}
\label{Sec:Dec}
As described in \cite[Table~II]{LPS}, there are $27$ inequivalent
decomposable self-dual codes $D_i$ $(i=1,2,\ldots,27)$
of length $24$.
A decomposable self-dual code can be written as
$C_1 \oplus C_2$ where $C_1$ and $C_2$ are 
self-dual codes of lengths $20$ and $4$, or
both $C_1$ and $C_2$ are indecomposable self-dual codes of length $12$.
We denote the unique self-dual $[4,2,3]$ code 
in~\cite[Table~1]{MPS} by $E_4$.
We also denote 
the extended ternary Golay $[12,6,6]$ code
by $G_{12}$ and the unique indecomposable
$[12,6,3]$ code by $4C_3(12)$~\cite[Table~1]{MPS}.
We denote the self-dual codes of length $20$ by 
$C_{20,1},\ldots,C_{20,24}$ according to the order given in
\cite[Tables II and III]{PSW}.
In Table~\ref{Table:dec},
we list the number $\beta_3$ of the codewords of weight $3$
and the order $\#\Aut(D_i)$ of the automorphism group
for $D_i$ $(i=1,2,\ldots,27)$.

%%%%%%%%%%%%%%%%%%%%%%%%%%%%%%%
\begin{table}[htb]
\caption{Decomposable self-dual codes of length 24}
\label{Table:dec}
\begin{center}
%{\small
%{\footnotesize
{\scriptsize
\begin{tabular}{c|c|c|r||c|c|c|r}
\noalign{\hrule height0.8pt}
$i$ & $(C_1,C_2)$ & $\beta_3$ & \multicolumn{1}{c||}{$\#\Aut(D_i)$} &
$i$ & $(C_1,C_2)$ & $\beta_3$ & \multicolumn{1}{c}{$\#\Aut(D_i)$} \\
\hline
${ 1}$ & $(C_{20, 1}, E_4)$ & 48 & 8806025134080 &
${15}$ & $(C_{20,15}, E_4)$ & 12 &       1327104 \\
${ 2}$ & $(C_{20, 2}, E_4)$ & 32 &   41278242816 &
${16}$ & $(C_{20,16}, E_4)$ & 12 &       1161216 \\
${ 3}$ & $(C_{20, 3}, E_4)$ & 24 &  126127964160 &
${17}$ & $(C_{20,17}, E_4)$ & 10 &        110592 \\
${ 4}$ & $(C_{20, 4}, E_4)$ & 24 &    1146617856 &
${18}$ & $(C_{20,18}, E_4)$ & 10 &        331776 \\
${ 5}$ & $(C_{20, 5}, E_4)$ & 20 &     477757440 &
${19}$ & $(C_{20,19}, E_4)$ &  8 &        184320 \\
${ 6}$ & $(C_{20, 6}, E_4)$ & 18 &     310542336 &
${20}$ & $(C_{20,20}, E_4)$ &  8 &         24576 \\
${ 7}$ & $(C_{20, 7}, E_4)$ & 16 &     198180864 &
${21}$ & $(C_{20,21}, E_4)$ &  8 &        491520 \\
${ 8}$ & $(C_{20, 8}, E_4)$ & 20 &     644972544 &
${22}$ & $(C_{20,22}, E_4)$ &  8 &         92160 \\
${ 9}$ & $(C_{20, 9}, E_4)$ & 18 &      89579520 &
${23}$ & $(C_{20,23}, E_4)$ &  8 &        497664 \\
${10}$ & $(C_{20,10}, E_4)$ & 16 &      15925248 &
${24}$ & $(C_{20,24}, E_4)$ &  8 &      99532800 \\
${11}$ & $(C_{20,11}, E_4)$ & 14 &     985374720 &
${25}$ & $(G_{12}, G_{12})$  &  0 &   72260812800\\
${12}$ & $(C_{20,12}, E_4)$ & 14 &       2985984 &
${26}$ & $(4C_3(12), 4C_3(12))$& 16 &  7739670528\\
${13}$ & $(C_{20,13}, E_4)$ & 12 &      19906560 &
${27}$ & $(4C_3(12), G_{12})$&  8 &   11824496640\\
${14}$ & $(C_{20,14}, E_4)$ & 12 &       2985984 &
       &                    &    &  \\ 
\noalign{\hrule height0.8pt}
   \end{tabular}
}
\end{center}
\end{table}
%%%%%%%%%%%%%%%%%%%%%%%%%%%%%%%

%%%%%%%%%%%%% decomposable %%%%%%%%
There is a unique decomposable self-dual $[24,12,6]$ code. 
This can also be established from the classification of 
odd unimodular lattices, by the following lemma.

\begin{lem}\label{Lem:Dec}
Let $C$ be a ternary self-dual code of length $n$ and
minimum weight at least $6$. Then $C$ is decomposable if
and only if $A_3(C)$ is decomposable.
\end{lem}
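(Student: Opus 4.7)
The plan is to prove the two implications separately. The forward direction is immediate from the fact that Construction~A respects coordinate decompositions: if $C=C_1\oplus C_2$ with $C_j\subseteq\FF_3^{S_j}$ for some partition $\{1,\dots,n\}=S_1\sqcup S_2$, then directly from the definition one has $A_3(C)=A_3(C_1)\oplus A_3(C_2)$, and the minimum-weight hypothesis plays no role here.

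For the converse, suppose $A_3(C)=L_1\oplus L_2$ is a nontrivial orthogonal decomposition of integral lattices. The key observation is that $d\geq 6$ together with Lemma~\ref{Lem:A3A6} forces $\alpha_1=\beta_3=0$, so $A_3(C)$ contains no vector of norm~$1$. For each $i$, I would write the norm-$3$ vector $\sqrt{3}\,e_i\in A_3(C)$ uniquely as $u_i+v_i$ with $u_i\in L_1$ and $v_i\in L_2$. Then $(u_i,u_i)$ and $(v_i,v_i)$ are nonnegative integers summing to $3$; a value of $1$ or $2$ would produce a norm-$1$ vector in $A_3(C)$, so one of $u_i,v_i$ must vanish. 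This yields a partition $\{1,\dots,n\}=S_1\sqcup S_2$ with $\sqrt{3}\,e_i\in L_j$ if and only if $i\in S_j$, and orthogonality of $L_1,L_2$ then forces $L_j$ to lie in the coordinate subspace $V_j$ spanned by $\{e_i:i\in S_j\}$.

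To descend this to $C$, I note that $L_j$ is a full-rank unimodular sublattice of $V_j$ containing $\sqrt{3}\,\ZZ^{S_j}$, so $L_j=A_3(C_j)$ for some self-dual code $C_j\subseteq\FF_3^{S_j}$; each $S_j$ is nonempty, hence $|S_j|\geq 4$ and $C_j\neq 0$. Extending by zero on the complement gives inclusions $C_j\hookrightarrow C$, and the dimension count
\[
\dim C_1+\dim C_2=\frac{|S_1|}{2}+\frac{|S_2|}{2}=\frac{n}{2}=\dim C
\]
yields $C=C_1\oplus C_2$.

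The most delicate point, I expect, is the norm-constraint step forcing the clean split of each $\sqrt{3}\,e_i$: without $d\geq 6$, norm-$1$ vectors in $A_3(C)$ may exist, so the decomposition of $\sqrt{3}\,e_i$ need not be trivial, and no coordinate partition need emerge from the lattice decomposition.
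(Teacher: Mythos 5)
Your proposal is correct and follows essentially the same route as the paper: both directions hinge on the observation that, since $A_3(C)$ has minimum norm $2$, a norm-$3$ vector cannot split nontrivially across an orthogonal decomposition, so the frame $\{\sqrt{3}e_i\}$ distributes over the summands and induces a coordinate partition. Your write-up merely spells out the descent from the lattice decomposition back to $C$ (via $L_j=A_3(C_j)$ and a dimension count) in more detail than the paper's one-line conclusion.
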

\begin{proof}
If $C$ is decomposable, then obviously $A_3(C)$ is decomposable.
Conversely, suppose 
that $A_3(C)=L \oplus L'$ for some sublattices $L,L'$.
Since $A_3(C)$ has minimum norm $\ge 2$,
both $L$ and $L'$ have minimum norms $\ge $2.
Let $x$ be a vector of norm 3 in $A_3(C)$.
Then $x$ can be written as
either $(x_1,0)$ or $(0,x_2)$ where $x_1 \in L$ and $x_2 \in L'$.
Hence, every $3$-frame of $A_3(C)$ is a union of those of $L$
and of $L'$.
Therefore, $C$ is decomposable.
\end{proof}

If $C$ is a decomposable self-dual $[24,12,6]$ code, then
Lemma~\ref{Lem:Dec} implies that
$A_3(C)$ is a decomposable odd unimodular lattice with
minimum norm $2$. The only such
lattices are $L_{24,153}=E_8 \oplus (D_8 \oplus D_8)^+$
and $L_{24,154}=D_{12}^+ \oplus D_{12}^+$.
Clearly, $L_{24,153}$ has no $3$-frame, since the lattice
$E_8$ does not have one.
Since the extended ternary Golay $[12,6,6]$ code $G_{12}$
is the unique code $C$ with $A_3(C) \simeq D_{12}^+$,
the decomposable code $D_{25}$ is the unique code $C$
with $A_3(C) \simeq L_{24,154}$, up to equivalence.

%%%%%%%%%%%%%%%%%%%%%%%%%%%%%%%%
\section{Self-dual [24, 12, 6] codes}
\label{Sec:d6}
In this section, we give a classification of 
self-dual $[24,12,6]$ codes by considering
$3$-frames of $24$-dimensional odd unimodular lattices
with minimum norm $2$.

By the approach described in Section~\ref{Sec:Pre},
we completed the classification of 
self-dual $[24,12,6]$ codes.
In Table~\ref{Table:d6}, we list the number $N_i$ 
of inequivalent self-dual $[24,12,6]$ codes $C$
with $A_3(C) \cong L_{24,i}$.
The columns $\#\Aut$ in the table list the
orders of automorphism groups.
From Table~\ref{Table:dec},
the only decomposable self-dual $[24,12,6]$ code is $D_{25}=
G_{12}\oplus G_{12}$ where the automorphism group order is
marked in Table~\ref{Table:d6}.
We remark that there is no self-dual $[24,12,6]$ code $C$
with $A_3(C) \cong L_{24,i}$ unless $i$ is listed
in Table~\ref{Table:d6}.

%%%%%%%%%%%%%%%%%%%%%%%%%%%%%%%
\begin{table}[htbp]
\caption{Ternary self-dual $[24,12,6]$ codes}
\label{Table:d6}
\begin{center}
%{\small
{\footnotesize
%{\scriptsize
\begin{tabular}{c|r|l||c|r|l}
\noalign{\hrule height0.8pt}
$i$  &  \multicolumn{1}{c|}{$N_i$} 
& \multicolumn{1}{c||}{$\#\Aut$} &
$i$  &  \multicolumn{1}{c|}{$N_i$} 
& \multicolumn{1}{c}{$\#\Aut$} \\
\hline
  2  &  2  & 64, 64                                      &
 32  &  2  & 3456$^{C(H7)}$, 31104$^{C(H2)}$                   \\
  3  &  5  & 8, 12, 24, 24, 3072                         &
 33  &  2  & 48, 576                                     \\
  4  &  4  & 16, 16, 32, 512                             &
 34  &  1  & 192                                         \\
  5  & 10  & 4, 4, 8, 8, 12, 16, 24, 32, 32, 72          &
 35  &  1  & 64                                          \\
  6  &  2  & 960$^{C(H4)}$, 3072$^{C(H6)}$                     &
 36  &  1  & 512                                         \\
  7  & 11  & 2$^{Tri}$, 4, 4, 4, 4, 4, 8, 8, 16, 16, 32          &
 37  &  1  & 256                                         \\
  8  &  8  & 4, 8, 12, 16, 16, 24, 32, 64                &
 40  &  1  & 256                                         \\
  9  &  8  & 16, 24, 48, 64, 64, 96, 192, 384            &
 41  &  1  & 256                                         \\
 10  & 12  & 4, 4, 4, 8, 12, 16, 16, 16, 24, 24, 48, 384 &
 43  &  1  & 5184                                        \\
 11  &  5  & 12, 16, 48, 128, 1728                       &
 44  &  2  & 384, 3456                                   \\
 12  &  4  & 16, 48, 48, 1728                            &
 46  &  1  & 512                                         \\
 13  & 11  & 4, 4, 4, 8, 8, 8, 16, 16, 16, 16, 16        &
 47  &  1  & 1728                                        \\
 14  &  1  & 64                                          &
 50  &  1  & 8192                                        \\
 15  &  4  & 16, 32, 32, 2048                            &
 53  &  1  & 1024                                        \\
 16  &  4  & 8, 8, 16, 32                                &
 59  &  1  & 2304                                        \\
 17  &  7  & 8, 32, 32, 32, 32, 96, 1152                 &
 61  &  1  & 17280                                       \\
 18  &  4  & 16, 32, 64, 384                             &
 62  &  1  & 5184                                        \\
 19  &  6  & 32, 48, 48, 192, 648, 864                   &
 63  &  1  & 1728                                        \\
 20  &  1  & 384                                         &
 64  &  1  & 34560                                       \\
 21  &  2  & 16, 64                                      &
 65  &  1  & 3456                                        \\
 22  &  3  & 8, 16, 128                                  &
 73  &  1  & 20736                                       \\
 23  &  2  & 64, 128                                     &
 74  &  1  & 49152$^{C(H3)}$                                \\
 24  &  4  & 32, 64, 64, 256                             &
 78  &  2  & 20736, 241920$^{g_{10}+\eta_{14}}$       \\
 25  &  2  & 2304, 3072                                  &
 87  &  1  & 276480                                      \\
 26  &  1  & 288                                         &
102  &  1  & 746496                                      \\
 27  &  2  & 288, 576                                    &
114  &  1  & 622080                                      \\
 28  &  1  & 96                                          &
115  &  1  & 746496                                      \\
 29  &  2  & 128, 768                                    &
130  &  1  & 8294400$^{C(H5)}$                              \\
 30  &  2  & 256, 768                                    &
141  &  1  & 88957440$^{g_{11}+p_{13}}$             \\
 31  &  2  & 96, 1296                                    &
154  &  1  & 72260812800$^{D_{25},C(H1)}$                       \\
\noalign{\hrule height0.8pt}
   \end{tabular}
}
\end{center}
\end{table}
%%%%%%%%%%%%%%%%%%%%%%%%%%%%%%%

By Lemma~\ref{Lem:A3A6},
the weight enumerator of a self-dual $[24,12,6]$ code $C$
is determined by the kissing number of the lattice $A_3(C)$.
Since $A_3(C)$ is isomorphic to one of the $155$ lattices whose
kissing numbers are given in~\cite[Table III]{Ba97}, we do not
give the weight enumerator of $C$ or the number of codewords
of weight $6$ in $C$ for codes $C$ given in Table~\ref{Table:d6}.

Let $\mathcal{C}_i$ denote the set of all inequivalent
indecomposable self-dual codes of length $24$
containing exactly $2i$ codewords of weight $3$.
The following values
\begin{equation}\label{Eq:Ti}
T_i = \sum_{C \in \mathcal{C}_i} \frac{1}{\# \Aut(C)}
\quad(i=0,1,\dots,8)
\end{equation}
were determined theoretically in~\cite[Table I]{LPS}, without
finding the set $\mathcal{C}_i$. We now have the set
$\mathcal{C}_0$ as
\[
\mathcal{C}_0=\{QR_{24},P_{24}\} \cup \mathcal{C}
\setminus \{D_{25}\},
\]
where $\mathcal{C}$ is the set of the $166$ codes
given in Table~\ref{Table:d6}, and
we verified that the value $T_0$ obtained from our classification
coincided with the value determined in~\cite[Table I]{LPS}.
This shows that there is no other self-dual code
with minimum weight $d \ge 6$.

%%%%%%%%%%%%%%% Known  %%%%%%%%%%%%%%
We investigate the previously known self-dual $[24,12,6]$ codes.
In~\cite{LTP} and \cite{LPS}, self-dual codes 
generated by the rows of Hadamard matrices
of order $24$ were studied.
The $60$ inequivalent Hadamard matrices of order $24$ give
exactly two inequivalent extremal self-dual codes~\cite{LPS}
and exactly seven inequivalent self-dual $[24,12,6]$ 
codes $C(H1),\ldots,C(H7)$ which are generated by the matrices
$H1,\ldots,H7$, respectively~\cite{LTP}.
We verified that all of these seven codes appear in the
present classification, and in Table~\ref{Table:d6} we mark 
the orders of automorphism groups for these codes.

%%%%%%%%%%%%%%% Aut  %%%%%%%%%%%%%%
Some properties of automorphism groups of 
self-dual $[24,12,6]$ codes were given in~\cite{LPS}.
For example, only the primes $2$, $3$, $5$, $7$, $11$
and $13$ can divide the orders of the automorphism groups.
A self-dual $[24,12,6]$ code with a trivial
automorphism group was given in~\cite[Fig.~2]{LPS}
and the authors conjectured that the code is unique.
Our classification shows that the conjecture is true.
We mark the code by $Tri$ in Table~\ref{Table:d6}. 
The code $g_{10}+\eta_{14}$ given in~\cite[Fig.~4]{LPS}
is the unique self-dual $[24,12,6]$ code with an automorphism 
of order $7$ and 
the code $g_{11}+p_{13}$ given in~\cite[Fig.~5]{LPS}
is the unique self-dual $[24,12,6]$ code with an automorphism 
of order $13$~\cite{LPS}.
In Table~\ref{Table:d6}
we mark the orders of automorphism groups for these two codes.
From our classification, only $g_{11}+p_{13}$ and 
the decomposable code $D_{25}$
are self-dual $[24,12,6]$ codes with an automorphism 
of order $11$.

%%%%%%%%%%%%%%%%%%%%%%%%%%%%%%%%
\section{Self-dual [24, 12, 3] codes}
\label{Sec:d3}
In this section, we give a classification of 
self-dual $[24,12,3]$ codes by considering
$3$-frames in $24$-dimensional odd unimodular lattices
with minimum norm $1$.
These are the lattices $\ZZ^{24}$ and $L_{i,j}$ ($i \le 23$).

In Table~\ref{Table:d3}, we list the number $N$ 
of inequivalent self-dual $[24,12,3]$ codes $C$
with $A_3(C) \cong L$ for $L=\ZZ^{24}, L_{i,j}$.
The column $\#\Aut$ in Table~\ref{Table:d3} lists the
orders of automorphism groups.
In Table~\ref{Table:d3},
we mark the orders of automorphism groups for decomposable codes.
We remark that there is no self-dual $[24,12,3]$ code $C$
with $A_3(C) \cong L_{i,j}$ unless the lattice $L_{i,j}$ is
listed in Table~\ref{Table:d3}.

%%%%%%%%%%%%%%%%%%%%%%%%%%%%%%%
\begin{table}[htbp]
\caption{Ternary self-dual $[24,12,3]$ codes}
\label{Table:d3}
\begin{center}
%{\small
{\footnotesize
%{\scriptsize
\begin{tabular}{c|r|l}
\noalign{\hrule height0.8pt}
$L$&\multicolumn{1}{c|}{$N$}&\multicolumn{1}{c}{$\#\Aut$} \\
\hline
% 18-6
$\ZZ^{24}$   & 1 & 8806025134080$^{D_{1}}$                \\
\hline
$L_{8}$        & 1 & 41278242816$^{D_{2}}$                \\
\hline
$L_{12}$     & 2 & 1146617856$^{D_{4}}$, 126127964160$^{D_{3}}$    \\
\hline
$L_{14}$     & 2 & 477757440$^{D_{5}}$, 644972544$^{D_{8}}$    \\
\hline
$L_{15}$     & 2 & 89579520$^{D_{9}}$, 310542336$^{D_{6}}$     \\
\hline
$L_{16, 1}$  & 1 & 7739670528$^{D_{26}}$                          \\
$L_{16, 3}$  & 2 & 15925248$^{D_{10}}$, 198180864$^{D_{7}}$    \\
\hline
$L_{17}$     & 3 & 2985984$^{D_{12}}$, 80621568, 985374720$^{D_{11}}$ \\
\hline
$L_{18, 1}$  & 1 & 67184640                     \\
$L_{18, 2}$  & 1 & 26873856                     \\
$L_{18, 3}$  & 3 & 1327104$^{D_{15}}$, 17915904, 19906560$^{D_{13}}$  \\
$L_{18, 4}$  & 3 & 1161216$^{D_{16}}$, 2985984$^{D_{14}}$, 6718464    \\
\hline
% 19-5
$L_{19, 1}$  & 1 & 13436928                     \\
$L_{19, 2}$  & 1 & 746496                       \\
$L_{19, 3}$  & 3 & 110592$^{D_{17}}$, 331776$^{D_{18}}$, 497664  \\
\hline
% 20-4
$L_{20, 2}$  & 1 & 11824496640$^{D_{27}}$    \\
$L_{20, 4}$  & 1 & 22394880                     \\
$L_{20, 6}$  & 1 & 1327104                      \\
$L_{20, 7}$  & 2 & 559872, 6718464              \\
$L_{20, 8}$  & 1 & 331776                       \\
$L_{20, 9}$  & 1 & 165888                       \\
$L_{20,10}$  & 1 & 41472                        \\
$L_{20,11}$  & 3 & 184320$^{D_{19}}$, 491520$^{D_{21}}$, 497664     \\
$L_{20,12}$  & 7 & 24576$^{D_{20}}$, 92160$^{D_{22}}$, 124416, 373248,
497664$^{D_{23}}$, 14929920, 99532800$^{D_{24}}$ \\
\hline
% 21-3
$L_{21, 2}$  & 1 & 29113344                     \\
$L_{21, 3}$  & 1 & 82114560                     \\
$L_{21, 4}$  & 1 & 1119744                      \\
$L_{21, 5}$  & 2 & 248832, 3483648              \\
$L_{21, 6}$  & 3 & 41472, 124416, 1866240       \\
$L_{21, 7}$  & 2 & 20736, 1119744               \\
$L_{21, 8}$  & 2 & 13824, 27648                 \\
$L_{21, 9}$  & 3 & 7776, 20736, 31104           \\
$L_{21,10}$  & 4 & 6912, 15552, 41472, 248832   \\
$L_{21,11}$  & 1 & 6912                         \\
$L_{21,12}$  & 2 & 62208, 186624                \\
\noalign{\hrule height0.8pt}
   \end{tabular}
}
\end{center}
\end{table}
%%%%%%%%%%%%%%%%%%%%%%%%%%%%%%%

\setcounter{table}{2} 
%%%%%%%%%%%%%%%%%%%%%%%%%%%%%%%
\begin{table}[htbp]
\caption{Ternary self-dual $[24,12,3]$ codes (continued)}
\begin{center}
%{\small
{\footnotesize
%{\scriptsize
\begin{tabular}{c|r|l}
\noalign{\hrule height0.8pt}
$L$&\multicolumn{1}{c|}{$N$}&\multicolumn{1}{c}{$\#\Aut$} \\
\hline
$L_{22, 8}$  & 1 & 1658880                                         \\
$L_{22,10}$  & 1 & 373248                                          \\
$L_{22,12}$  & 1 & 248832                                          \\
$L_{22,14}$  & 2 & 36864, 110592                                   \\
$L_{22,15}$  & 1 & 96768                                           \\
$L_{22,16}$  & 1 & 27648                                           \\
$L_{22,17}$  & 1 & 20736                                           \\
$L_{22,18}$  & 2 & 6912, 20736                                     \\
$L_{22,19}$  & 2 & 9216, 829440                                    \\
$L_{22,20}$  & 3 & 2304, 6912, 10368                               \\
$L_{22,21}$  & 3 & 1152, 3456, 3456                                \\
$L_{22,22}$  & 2 & 13824, 18432                                    \\
$L_{22,23}$  & 2 & 576,   9216                                     \\
$L_{22,24}$  & 6 & 1152, 1152, 1728, 4608, 6912, 10368             \\
$L_{22,25}$  & 5 & 576, 576, 864, 1152, 10368                      \\
$L_{22,26}$  & 5 & 576, 1728, 2304, 4608, 4608                     \\
$L_{22,27}$  & 2 & 4608, 746496                                    \\
\hline
% 23-1
$L_{23, 4}$  & 1 & 68428800                                        \\
$L_{23,16}$  & 1 & 622080                                          \\
$L_{23,18}$  & 1 & 186624                                          \\
$L_{23,24}$  & 1 & 13824                                           \\
$L_{23,27}$  & 2 & 31104, 311040                                   \\
$L_{23,30}$  & 1 & 3072                                            \\
$L_{23,31}$  & 1 & 3072                                            \\
$L_{23,32}$  & 2 & 2880, 3456                                      \\
$L_{23,33}$  & 2 & 2304, 27648                                     \\
$L_{23,35}$  & 2 & 1152, 3456                                      \\ 
$L_{23,37}$  & 3 & 192, 384, 3456                                  \\
$L_{23,38}$  & 2 & 864, 10368                                      \\
$L_{23,39}$  & 4 & 192, 288, 2592, 15552                           \\
$L_{23,40}$  & 6 & 288, 384, 576, 768, 768, 3456                   \\
$L_{23,41}$  & 2 & 384, 432                                        \\
$L_{23,42}$  & 3 & 96, 144, 192                                    \\
$L_{23,43}$  & 2 & 96, 384                                         \\
$L_{23,44}$  & 9 & 24, 48, 96, 96, 96, 144, 384, 384, 1728         \\
$L_{23,45}$  & 4 & 96, 192, 384, 384                               \\
$L_{23,46}$  & 10& 24, 48, 48, 48, 48, 96, 192, 384, 768, 13824    \\
$L_{23,47}$  & 4 & 60, 72, 96, 288                                 \\
\noalign{\hrule height0.8pt}
   \end{tabular}
}
\end{center}
\end{table}
%%%%%%%%%%%%%%%%%%%%%%%%%%%%%%%

By Lemma~\ref{Lem:A3A6},
the weight enumerator of a self-dual $[24,12,3]$ code $C$
is determined by the numbers of vectors of norms $1$ and $2$
in the lattice $A_3(C)$.
Since $A_3(C)$ is isomorphic to $L_{i,j}=M_{i,j}\oplus\ZZ^{24-i}$
for some $i,j$, and the kissing numbers of $M_{i,j}$ are given 
in~\cite[Table 16.7]{SPLAG}, we do not give the weight enumerators of
codes in Table~\ref{Table:d3}.

Similar to Section~\ref{Sec:d6},
for each $i=1,2,\ldots,8$ we computed
$T_i$ given in (\ref{Eq:Ti}) and verified that it
coincided with the value determined in~\cite[Table I]{LPS}.
This shows that there is no other self-dual code
with minimum weight $3$.
In addition, the number of distinct self-dual codes of 
length $n$ is known~\cite{MPS} as
\[
N(n)=2 \prod_{i=1}^{(n-2)/2}(3^i+1).
\]
As a check, we verified the mass formula
\[
\sum_{D \in \mathcal{D}_{24}} \frac{2^{24} \cdot 24!}{\#\Aut(D)} =
96722522147893108730806108160000
=N(24),
\]
where $\mathcal{D}_{24}$ denotes the set of all inequivalent
self-dual codes of length $24$.
The mass formula shows that there is no other self-dual code
of length $24$ and the classification is complete.
As a corollary, we have the following:

\begin{cor}
A $24$-dimensional odd unimodular lattice $L$
can be constructed from some ternary self-dual code of length $24$
by Construction~A if and only if
$L$ is isomorphic to one of the lattices given in
Tables~\ref{Table:d6} and~\ref{Table:d3}, and
the odd Leech lattice.
\end{cor}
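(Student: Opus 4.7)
The plan is to combine the complete classification of ternary self-dual codes of length $24$ (Theorem~\ref{thm} together with the two extremal codes $QR_{24}$ and $P_{24}$ from~\cite{LPS}, whose completeness is guaranteed by the mass formula verification above) with the minimum-norm formula for Construction~A recorded in Section~\ref{Sec:Pre}. First I would observe that the Mallows--Sloane bound forces the minimum weight $d$ of any ternary self-dual code of length $24$ into $\{3,6,9\}$, and that the associated lattice $A_3(C)$ has minimum norm $\min\{3,d/3\}\in\{1,2,3\}$ accordingly. Both directions of the corollary can then be read off by splitting on $d$.

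When $d=6$, the lattice $A_3(C)$ is a $24$-dimensional odd unimodular lattice with minimum norm $2$; by the very construction of Table~\ref{Table:d6}, the lattices $L_{24,i}$ that occur as $A_3(C)$ for some self-dual $[24,12,6]$ code $C$ are exactly those tabulated there. The same reasoning applies in the $d=3$ case to Table~\ref{Table:d3}: every listed $L_{i,j}$ (or $\ZZ^{24}$) is realized by at least one code, and conversely every self-dual $[24,12,3]$ code yields a lattice of minimum norm $1$ appearing in the table.

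The remaining case $d=9$ concerns the two extremal codes $QR_{24}$ and $P_{24}$. Here $A_3(C)$ is a $24$-dimensional odd unimodular lattice of minimum norm $3$. The classification recalled from~\cite{Bor} and~\cite[Table~17.1]{SPLAG} singles out a unique such lattice, namely the odd Leech lattice $L_{24,1}$; hence $A_3(QR_{24})\cong A_3(P_{24})\cong L_{24,1}$. Combining the three cases in both directions yields the corollary.

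The only point where something could in principle go wrong is the possibility that some additional ternary self-dual code of length $24$ has been overlooked, which would introduce a lattice not on our list; but this is precisely what the mass formula check immediately preceding the corollary rules out. Thus I expect no real obstacle beyond invoking uniqueness of the odd Leech lattice among $24$-dimensional odd unimodular lattices of minimum norm $3$.
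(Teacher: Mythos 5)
Your proposal is correct and matches the paper's (implicit) argument: the corollary is stated as an immediate consequence of the completed classification, with the mass formula guaranteeing completeness, Tables~\ref{Table:d6} and~\ref{Table:d3} recording exactly which lattices of minimum norm $2$ and $1$ arise, and the two extremal codes accounting for the odd Leech lattice $L_{24,1}$ as the unique $24$-dimensional odd unimodular lattice of minimum norm $3$. No gaps.
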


%%%%%%%%%%%%%%%%%%%%%%%%%%%%%%
\section{Even unimodular neighbors of $A_3(C)$}
Let $C$ be a self-dual code of length 
$n \equiv 0 \pmod{12}$ containing the all-one's vector $\1$.
To construct the Niemeier lattices from self-dual codes of length $24$, 
Montague \cite{Montague} considered the following 
constructions of unimodular lattices
\[
L_S(C)=\Big\langle \frac{1}{2\sqrt{3}}\1,B_3(C) \Big\rangle
\text{ and }
L_T(C)=\Big\langle \frac{1}{2\sqrt{3}}\1-e_1,B_3(C) \Big\rangle
\]
which are unimodular neighbors of $A_3(C)$,
where
$B_3(C)=\{v \in A_3(C)|(v,v) \in 2\ZZ\}$ and
$e_1=(\sqrt{3},0,\ldots,0)$.
In particular, if $n \equiv 0 \pmod{24}$, then
$L_S(C)$ and $L_T(C)$ are the even unimodular neighbors of $A_3(C)$.
These constructions 
are called the straight and twisted constructions, respectively
\cite{Montague}.
Montague \cite{Montague} demonstrated that the $23$
Niemeier lattices other than the Leech lattice can be
constructed by the straight construction, and
the $22$ Niemeier lattices other than the two lattices with
root systems $A_{24}$ and $D_{24}$ can be constructed by 
the twisted construction.
He also conjectured that the Niemeier lattice
with root system $D_{24}$ cannot be constructed by
the twisted construction.

We verified that
the code $g_{11}+p_{13}$ given in~\cite[Fig.~5]{LPS}
which is equivalent to the unique code $C$ with
$A_3(C) \cong L_{24,141}$ in Table~\ref{Table:d6},
gives the Niemeier lattice with root system $A_{24}$
by the twisted construction. 
More specifically, let $C$ be the code with 
generator matrix
\begin{equation}\label{eq:genmat}
\left(\begin{array}{cc}
G_{11}           & O_{5 \times 13} \\
O_{6 \times 11}  & P_{13}          \\
%% u & t_0 \\
00000011111 & 1101000001000
\end{array}\right),
\end{equation}
where $O_{m \times n}$ denotes the $m \times n$ zero matrix,
\[
G_{11}=
\left(\begin{array}{c}
10000201221\\
01000210122\\
00100221012\\
00010222101\\
00001212210
\end{array}\right) \text{ and }
P_{13}=
\left(\begin{array}{c}
1000002212001\\
0100001012202\\
0010002010221\\
0001001022021\\
0000101220201\\
0000011210022
\end{array}\right).
\]
Then the lattice $L_T(C)$ is the Niemeier lattice with root system
$A_{24}$.
Hence the $23$ Niemeier lattices other than the lattice with
root system $D_{24}$ can be constructed by the twisted construction.

In addition, we verified that the lattice with root 
system $D_{24}$ cannot be constructed by the twisted 
construction.
We note, however, that there is a delicate point regarding
the distinction between the two constructions, as described
by the following proposition.

\begin{prop}
Let $C$ be a ternary self-dual code of length $n\equiv0\pmod{12}$.
Suppose that $\1 \in C$ and
$v=(v_1,\ldots,v_{n})$ is a codeword of weight $n$ in $C$.
Let $P$ be the diagonal matrix whose diagonal entries are
the entries of the codeword $v$ regarded as elements of
$\{\pm1\}\subset\ZZ$.
Then 
\[
L_S(C)\cdot P=\begin{cases}
L_S(C\cdot P)&\text{if $\prod_{i=1}^n v_i=1$,}\\
L_T(C\cdot P)&\text{otherwise.}
\end{cases}
\]
\end{prop}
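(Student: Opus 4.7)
The plan is to apply the orthogonal transformation $P$ to the defining generators of $L_S(C)$, recognize the result as a lattice over $B_3(C\cdot P)$ generated by a single vector, and then match that vector modulo $B_3(C\cdot P)$ with the generator of either $L_S(C\cdot P)$ or $L_T(C\cdot P)$.

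First I would observe that $P$ is orthogonal and that its mod-$3$ reduction is a diagonal unit matrix over $\FF_3$; from the definitions this yields $A_3(C)\cdot P=A_3(C\cdot P)$, and since $P$ preserves norms, $B_3(C)\cdot P=B_3(C\cdot P)$. Combined with $\1\cdot P=v$, this gives
\[
L_S(C)\cdot P=\Big\langle \tfrac{1}{2\sqrt{3}}\,v,\; B_3(C\cdot P)\Big\rangle.
\]

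Next, set $S=\{i:v_i=-1\}$ with characteristic vector $\chi_S\in\ZZ^n$. A direct expansion gives $\tfrac{1}{2\sqrt{3}}v - \tfrac{1}{2\sqrt{3}}\1 = -\tfrac{1}{\sqrt{3}}\chi_S$. The key auxiliary claim is that $\tfrac{1}{\sqrt{3}}\chi_S\in A_3(C\cdot P)$: since $\1,v\in C$ the mod-$3$ reduction of $\chi_S$ equals $2(\1-v)\in C$, and then $\chi_S\cdot P=-\chi_S$ together with $P^2=I$ shows $\chi_S\in C\cdot P$ as well. Using the $3$-divisibility of weights in ternary self-dual codes, $|S|\equiv0\pmod3$, so the norm $|S|/3$ of $\tfrac{1}{\sqrt{3}}\chi_S$ is an integer, and it is even precisely when $|S|$ is even, equivalently when $\prod_{i=1}^n v_i=1$.

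If $\prod v_i=1$, then $\tfrac{1}{\sqrt{3}}\chi_S\in B_3(C\cdot P)$, the congruence $\tfrac{1}{2\sqrt{3}}v\equiv\tfrac{1}{2\sqrt{3}}\1\pmod{B_3(C\cdot P)}$ holds, and $L_S(C)\cdot P = L_S(C\cdot P)$. Otherwise $|S|/3$ is odd, and I would instead consider $u:=\tfrac{1}{2\sqrt{3}}v-\tfrac{1}{2\sqrt{3}}\1+e_1 = e_1-\tfrac{1}{\sqrt{3}}\chi_S$; multiplying $u$ by $\sqrt{3}$ produces an integer vector whose mod-$3$ reduction is $-\chi_S\in C\cdot P$, so $u\in A_3(C\cdot P)$, and a short computation gives $(u,u)=3+|S|/3-2[1\in S]$, which is even since $|S|/3$ is odd. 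Thus $u\in B_3(C\cdot P)$, giving $L_S(C)\cdot P=L_T(C\cdot P)$. The main obstacle I anticipate is the bookkeeping in this twisted case: the value of $(u,u)$ involves a case distinction on whether $v_1=+1$ or $v_1=-1$, and both subcases must yield an even norm simultaneously, which is exactly what the parity of $|S|/3$ arranges.
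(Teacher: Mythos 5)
Your proof is correct and follows essentially the same route as the paper's: apply $P$ to obtain $L_S(C)\cdot P=\bigl\langle \tfrac{1}{2\sqrt{3}}v,\,B_3(C\cdot P)\bigr\rangle$ and compare the generator with those of $L_S(C\cdot P)$ and $L_T(C\cdot P)$ modulo $B_3(C\cdot P)$, with the parity of $|S|/3$ (equivalently of $(v-\1,v-\1)/4$, which is what the paper computes via the congruences mod $12$ and mod $8$) deciding the outcome. You are somewhat more explicit than the paper in verifying the twisted case by a direct norm computation rather than leaving it implicit, but the underlying argument is the same.
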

\begin{proof}
Observe $\1\in C\cdot P$. It is easy to see that
$B_3(C)\cdot P=B_3(C\cdot P)$.
Thus
\[
L_S(C)\cdot P
=\Big\langle \frac{1}{2\sqrt{3}}v,B_3(C\cdot P) \Big\rangle,
\]
where $v$ is regarded as a vector of $\{\pm1\}^n\subset\ZZ^n$.
Therefore $L_S(C)\cdot P=L_S(C\cdot P)$ if and only if
\[
\frac{1}{2\sqrt{3}}(v-\1)\in B_3(C\cdot P).
\]
Since $(v-\1,v-\1)\equiv0\pmod{12}$,
this is equivalent to $(v-\1,v-\1)\equiv0\pmod{8}$, or
$\prod_{i=1}^n v_i=1$.
\end{proof}

Therefore, if a self-dual code $C$ contains both $\1$ and
a codeword $v$ of weight $n$ with $\prod_{i=1}^n v_i=-1$, then
$L_S(C)$ (resp.\ $L_T(C)$) is isomorphic to $L_T(C')$ 
(resp.\ $L_S(C')$) for some code $C'$ which is equivalent to $C$.
This means that
the definitions of the straight and the twisted
constructions are independent of the choice of a representative
in the equivalence class of codes only if
$\prod v_i=1$ holds for all codewords $v$ of weight $n$ in $C$.
Self-dual codes satisfying this condition are called admissible
\cite{HKO}. For example, the code $C$ with generator matrix
given in (\ref{eq:genmat}) is not admissible. In fact, there is
a code $C'$ equivalent to $C$ such that $L_T(C')$ is the
Niemeier lattice with root system $A_{12}^2$.

The constructions of Montague \cite{Montague}
have been generalized in \cite{HKO}
for any self-dual codes, not necessarily containing $\1$.
Let $E_4$ be the self-dual $[4,2,3]$ code with generator
matrix
${\displaystyle
\left(\begin{array}{c}
1021\\
0122
\end{array}\right).
}$
The decomposable code $D_1$ is equivalent to the direct sum
$E_4^6$ of six copies of $E_4$, and the Niemeier lattice with
root system $D_{24}$ can be obtained as both $L_S(E_4^6)$ and
$L_T(E_4^6)$, in the notation of \cite{HKO}.
In fact, for any self-dual code $C$ of length $24$,
$\beta_{24}=48 - 21\beta_3 + \beta_6$ holds,
where $\beta_i$ denotes the number of codewords of weight $i$ in $C$
(see \cite[Table III]{LPS}).
Thus, if $C$ has maximum weight less than $24$,
then $C$ has minimum weight $3$.
This implies that $A_3(C)$ has minimum norm $1$, and
hence the two even neighbors of $A_3(C)$ are isomorphic.

%%%%%%%%%%%%%%%%%%%  references  %%%%%%%%%%%%%%%%%%%%%%

\end{document}